\definecolor{maroon}{rgb}{.69,.188,.376}
\definecolor{darkgreen}{rgb}{0,.5,0}
\definecolor{darkblue}{rgb}{0,0,.5}
\definecolor{magenta}{rgb}{1,0,1}
\newtheorem{theorem}{Theorem}[section]
\newtheorem{lemma}[theorem]{Lemma}
\newtheorem{corollary}[theorem]{Corollary}
\newtheorem{proposition}[theorem]{Proposition}
\newtheorem{remark}[]{Remark}
\newtheorem{definition}[]{Definition}
\numberwithin{equation}{section}
\definecolor{Red}{rgb}{1,0,0}
\definecolor{Blue}{rgb}{0,0,1}
\definecolor{Olive}{rgb}{0.41,0.55,0.13}
\definecolor{Yarok}{rgb}{0,0.5,0}
\definecolor{Green}{rgb}{0,1,0}
\definecolor{MGreen}{rgb}{0,0.8,0}
\definecolor{DGreen}{rgb}{0,0.55,0}
\definecolor{Yellow}{rgb}{1,1,0}
\definecolor{Cyan}{rgb}{0,1,1}
\definecolor{Magenta}{rgb}{1,0,1}
\definecolor{Orange}{rgb}{1,.5,0}
\definecolor{Violet}{rgb}{.5,0,.5}
\definecolor{Purple}{rgb}{.75,0,.25}
\definecolor{Brown}{rgb}{.75,.5,.25}
\definecolor{Grey}{rgb}{.7,.7,.7}
\definecolor{Black}{rgb}{0,0,0}
\newcommand{\ignore}[1]{{}}
\date{\today}
\begin{document}

\title{Complex Solutions to Bessel SDEs and SLEs}
\author{Atul Shekhar%
  \thanks{University Lyon 1.
    Email: \url{atulshekhar83@gmail.com}}
\and Vlad Margarint
\thanks{NYU Shanghai.
    Email: \url{margarint@nyu.edu}} 
 }


\maketitle
\begin{abstract}
We consider a variant of Bessel SDE by allowing the solution to be complex valued. Such SDEs appear naturally while studying the trace of Schramm-Loewner-Evolutions (SLE). 
We establish the existence and uniqueness of the strong solution to such SDEs when the dimension is negative. We also consider the stochastic flow associated to such SDEs and prove that it is almost surely continuous. Our proofs are based on an improvement of the derivative estimate of Rohde-Schramm \cite{RS05}. We finally show the connection between such stochastic flows and SLE$_{\kappa}$ for $\kappa <4$.
\end{abstract}

\section{Introduction and results.} \label{intro}

In this article we study a complex variant of Bessel stochastic differential equation (SDE). Such complex Bessel processes appear naturally in the study of Schramm-Loewner-Evolutions SLE$_{\kappa}$, $\kappa \in (0,4)$, see Corollary \ref{embedding} below.

\subsection{Real Bessel Processes}
Let us first recall some basic facts on classical real valued Bessel processes. There are various ways to define it and we will follow the approach of \cite[Chapter-11]{RY}. Also see \cite{lawler-notes} for a different approach based on Girsanov transformation. \\
The content of this subsection is very well known and readers familiar with Bessel processes can skip to next section. However, we believe that recalling the following basic facts helps the presentation of our paper and clarify key points in our discussion. \\

Let $(\Omega, \mathcal{F}, \mathbb{P})$ be a complete probability space and $B$ be a one dimensional Brownian motion defined on $\Omega$ starting from $B_0= 0$ with its natural filtration $\{\mathcal{F}_t\}_{t\geq 0}$. For $\delta\geq 0$ and $x\geq 0$, a $\delta$-dimensional Bessel process started at $x$ is defined as the real valued solution to SDE
\begin{equation}\label{real-bessel}
dX_t = dB_t + \frac{\delta -1}{2}\frac{1}{X_t}dt, \hspace{2mm}X_0 = x.
\end{equation}
For $x\neq 0$, equation \eqref{real-bessel} admits a unique strong solution $X_t$ for $t < T^x$, where $T^x$ is the first hitting time of level 0 starting from $x$ (actually $X_t-B_t$ satisfies an ordinary differential equation (ODE) and we will sometimes slightly incorrectly say that $X_t$ satisfies an ODE when $x\neq 0$). It is easy to see that $X_t \to 0$ as $t\uparrow T^x$ whenever $T^x < \infty$. For  $x=0$, equation \eqref{real-bessel} is a singular equation. If $\delta >1$, one can make sense of \eqref{real-bessel} by imposing an additional condition that $1/|X_t|$ is a Lebesgue integrable function so that the right hand side of \eqref{real-bessel} is well defined. It was proven in \cite{Mckean-Jr} that there exists a unique non-negative solution $X$ to \eqref{real-bessel} which is defined as the $\delta$-dimensional Bessel process started at $0$. Note that since $-B$ is also a Brownian motion, there is also a non-positive solution to \eqref{real-bessel} which can be obtained by reflecting the non-negative solution. We will interpret this selection of non-negative solution as choosing a continuous branch out of many solutions. The case of $\delta \in [0,1]$ requires a different definition because \eqref{real-bessel} doesn't admit any solution such that $1/|X_t|$ is Lebesgue integrable. In fact, $X_t$ is not a semimartingale for $\delta \in [0,1)$. The following alternative definition works well for all $\delta \geq 0$ and coincides with the previous definition for $\delta >1$. Consider squared Bessel processes defined by SDE 
\begin{equation}\label{sq-eqn}
dZ_t = 2\sqrt{|Z_t|}dB_t + \delta dt, \hspace{2mm} Z_0 = x^2.
\end{equation}

Since square root function on $[0,\infty)$ is a $1/2$-H\"older function, Yamada-Watanabe Theorem implies that \eqref{sq-eqn} admits a unique strong solution. The $\delta$-dimensional Bessel process is then defined by $X := \sqrt{|Z|}$. It follows by stochastic comparison principles that for $\delta\geq 0$, $Z_t \geq 0$. Thus $X= \sqrt{|Z|}= \sqrt{Z}$ and \eqref{sq-eqn} is equivalent to 
\begin{equation}\label{sq-eqn-2}
dZ_t = 2\sqrt{Z_t}dB_t + \delta dt, \hspace{2mm} Z_0 = x^2.
\end{equation}  
The choice of non-negative solution above can also be intuitively viewed as the reflected solution, i.e. the solution which reflects back towards positive axis whenever it hits zero.

The above definition of $\delta$-dimensional Bessel processes using \eqref{sq-eqn} is also valid when $\delta< 0$. In this case the solution starting at zero will be non-positive and the modulus inside square root function is required to make sense of \eqref{sq-eqn} as a real equation. In some sense, we are forcing the solution $Z$ to be real valued by putting a modulus inside the square root function. We will show in this article that there are other interesting ways to continue the solution after it has hit zero. \\

\subsection{Complex Variants of Bessel Processes}

Our main idea in this article is to allow solutions to Bessel SDEs \eqref{real-bessel} to be complex valued and consider a variant of \eqref{sq-eqn} using complex square root. Consider the singular equation 
\begin{equation}\label{complex-bessel}
dH_t =  dB_t +\frac{\delta-1}{2}\frac{1}{H_t}dt, \hspace{2mm}H_0 = 0,
\end{equation}
where we allow the solution $H$ to be complex valued. Similarly as above, we will use the idea of considering the squared equation  to make sense of \eqref{complex-bessel}. To this end, consider the square map $z \mapsto z^2$ defined on $\mathbb{C} \to \mathbb{C}$, where $\mathbb{C}$ is the complex plane. Let $\mathbb{H} := \{z \in \mathbb{C}| Im(z) >0\}$ denote the upper half plane. Since the square root map on complex plane is multivalued, we will work with the branch $z\mapsto \sqrt{z}$ defined on $\mathbb{C}\setminus (0,\infty) \to \mathbb{H}\cup\{0\}$ defined by 
\[ \sqrt{z} = sgn(Im(z))\sqrt{\frac{|z|+ Re(z)}{2}} + i\sqrt{\frac{|z|- Re(z)}{2}},\]
where $sgn(y)= 1,-1,0$ for $y>0, y<0, y=0$ respectively. With an abuse of notation, we also use $\sqrt{x}$ to mean the usual real square root of $x$ for $x\geq 0$. We will need the following definition of square roots of continuous $\mathbb{C}$-valued curves. The following definition also appeared in \cite{shekhar-tran-wang}. Results of \cite{shekhar-tran-wang} can be considered as deterministic analogs of results in the present article.


\begin{definition}\label{branch-def}
For a continuous curve (resp. continuous adapted process) $Y:[0,\infty)\to \mathbb{C}$, a branch square root of $Y$ is a measurable (resp. adapted) curve $A:[0,\infty) \to \overline{\mathbb{H}}$ such that $A_t^2 = X_t $ for all $t\in[0,\infty)$. We then write $A = \sqrt{Y}^b$.
\end{definition}
Note that there are two possible extensions of $\sqrt{z}$ function as $z\to x \in (0,\infty)$, $+\sqrt{x}$ or $-\sqrt{x}$ depending on whether $x$ is approached from upper half plane or lower half plane respectively. Thus, there could be more than one branch square roots for a given continuous process $Y$. Choosing a branch square root is equivalent to making a choice from these two possible extensions in a measurable/adapted way whenever $Y$ hits $(0,\infty)$. Also, there is no loss of generality in working with the upper half plane in the above definition instead of lower half plane. This will be akin to the conventional choice of non-negative solutions to real Bessel SDEs \eqref{real-bessel} as mentioned above. \\

We now consider the complex squared Bessel SDE given by  
\begin{equation}\label{squared-bessel}
dY_t = 2\sqrt{Y_t}^bdB_t + \delta dt, \hspace{2mm}Y_0=0,
\end{equation}
where $\sqrt{Y}^b$ is some branch square root of $Y$. Note that since branch squared roots are assumed to be adapted to filtration $\{\mathcal{F}_t\}_{t\geq 0}$, equation \eqref{squared-bessel} is well defined as an It\^o SDE. The choice of branch square root $\sqrt{Y}^b$ may a priori depend on the solution $Y$ itself. This is the principal difference of equation \eqref{squared-bessel} as compared to \eqref{sq-eqn} where the choice of branch square root was fixed beforehand. Equation \eqref{squared-bessel} is more natural to consider if we want solution $Y$ to depend ``holomorphically" on the initial datum because unlike $\sqrt{|z|}$ function, $\sqrt{z}$ is a holomorphic function. A natural question is whether the equation \eqref{squared-bessel} admits a unique solution? Our first main result is the following theorem establishing the existence and uniqueness of strong solution to \eqref{squared-bessel}.

\begin{theorem}\label{main-thm-1}
For $\delta <0$,
\begin{enumerate}
\item If  \hspace{1mm}$Y$ is a solution to \eqref{squared-bessel}, then almost surely for all $t >0$, $Y_t \in \mathbb{C}\setminus [0,\infty)$. In particular, $\sqrt{Y}^b_t = \sqrt{Y_t}$ and \eqref{squared-bessel} is equivalent to 
\begin{equation}\label{equivalent-eqn}
dY_t = 2\sqrt{Y_t}dB_t + \delta dt, \hspace{2mm} Y_0 = 0.
\end{equation}
\item There exists a continuous adapted process $Y$ satisfying \eqref{equivalent-eqn}. Also, if $Y$ and $\tilde{Y}$ are two continuous adapted solutions to \eqref{equivalent-eqn}, then 
\begin{equation}\label{equality}
\mathbb{P}[Y_t = \tilde{Y}_t \hspace{2mm} \mbox{for all $t\geq 0$}] =1.
\end{equation}
\end{enumerate}
\end{theorem}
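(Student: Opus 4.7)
The plan is to prove the theorem in three stages: (i) rule out solutions of \eqref{squared-bessel} from visiting the half-line $[0,\infty)$ after time $0$, which yields part~(1); (ii) construct a solution to \eqref{equivalent-eqn} by regularization; and (iii) establish pathwise uniqueness.

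For part~(1), fix a solution $(Y_t, \sqrt{Y}^b)$ of \eqref{squared-bessel} and argue in two moves. First, $Y$ leaves $[0,\infty)$ immediately: if $Y$ remained in $[0,\infty)$ on some interval $[0,\tau]$ with $\tau > 0$, then $\sqrt{Y_t}^b \in \R$ there, and setting $W_t := \int_0^t \mathrm{sgn}(\sqrt{Y_s}^b)\,dB_s$ (with $\mathrm{sgn}(0) := 1$) gives a Brownian motion by L\'evy's characterization; with respect to $W$, equation \eqref{squared-bessel} reduces on $[0,\tau]$ to the classical real squared Bessel $dY_t = 2\sqrt{Y_t}\,dW_t + \delta\,dt$ with $Y_t \ge 0$ and $Y_0 = 0$. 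The excerpt's discussion of classical squared Bessel processes rules this out for $\delta<0$ (the drift forces $Y$ to be negative at $0^+$), giving a contradiction. Second, $Y$ does not return to $[0,\infty)$ once it has left: on the open set $\{Y_t \in \mathbb{C}\setminus [0,\infty)\}$ the branch is holomorphic and It\^o's formula applied to $H_t := \sqrt{Y_t}$ yields
\[ dH_t \;=\; dB_t + \frac{\delta-1}{2\,H_t}\,dt, \]
so the imaginary part $Q_t := \mathrm{Im}\,H_t$ obeys
\[ dQ_t \;=\; \frac{(1-\delta)\,Q_t}{2\,|H_t|^2}\,dt \]
with no $dB$ term. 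For $\delta < 0$ this forces $Q$ to be non-decreasing on such excursions, while by the explicit branch definition $Q_t \to 0$ whenever $Y_t \to [0,\infty)$; these are incompatible once $Q > 0$, so no return is possible. Combining the two moves gives $Y_t \in \mathbb{C}\setminus[0,\infty)$ for all $t > 0$, hence $\sqrt{Y}^b_t = \sqrt{Y_t}$ and \eqref{squared-bessel} is equivalent to \eqref{equivalent-eqn}.

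For the existence part of (2), I would regularize by taking $Y^\varepsilon_0 := -\varepsilon \in \mathbb{C}\setminus[0,\infty)$ and solving $dY^\varepsilon_t = 2\sqrt{Y^\varepsilon_t}\,dB_t + \delta\,dt$. Since $\sqrt{\cdot}$ is holomorphic and locally Lipschitz on $\mathbb{C}\setminus[0,\infty)$, a unique strong solution exists up to the first hitting time of $[0,\infty)$; the monotonicity argument of part~(1), applied to the perturbed equation, shows this hitting time is $+\infty$ almost surely. Tightness of $\{Y^\varepsilon\}_{\varepsilon>0}$ in $C([0,T],\mathbb{C})$ as $\varepsilon \to 0$ should then follow from uniform moment estimates on $\sqrt{Y^\varepsilon_t}$ on short time intervals --- precisely the point at which the improved Rohde--Schramm derivative estimate announced in the abstract enters --- and any subsequential limit yields a continuous adapted solution of \eqref{equivalent-eqn}.

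For pathwise uniqueness \eqref{equality}, I would combine part~(1) with a near-origin analysis. By part~(1), both branch square roots $H_t := \sqrt{Y_t}$ and $\tilde H_t := \sqrt{\tilde Y_t}$ are well-defined and continuous for $t > 0$ and satisfy the complex Bessel SDE $dH_t = dB_t + \tfrac{\delta-1}{2H_t}\,dt$ driven by the \emph{same} Brownian motion $B$, so their difference obeys the deterministic (random-coefficient) ODE
\[ d(H_t - \tilde H_t) \;=\; -\,\frac{\delta-1}{2\,H_t\tilde H_t}\,(H_t - \tilde H_t)\,dt, \]
the $dB$ terms cancelling. On any interval bounded away from $t=0$ this is a linear ODE with integrable coefficient, so $H - \tilde H$ vanishes identically provided it vanishes at some positive time; quantitative control on the near-origin behaviour of $H$ (again from improved Rohde--Schramm-type estimates) is needed to propagate the merging through $t=0$. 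The main technical obstacle I anticipate is precisely this near-origin analysis, both for the It\^o/monotonicity argument of part~(1) and for closing the uniqueness argument: quantitative control on the speed at which $H_t$ moves away from $0$ is the common key input.
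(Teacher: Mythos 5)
Your part~(1) is right in outline but hand-waves the key step. Saying ``the excerpt's discussion of classical squared Bessel processes rules this out'' is not a proof; the paper supplies one directly: Burkholder--Davis--Gundy and Cauchy--Schwarz give $\E[\sup_{t\le T}|Y_t|]<\infty$, so $Y_t-\delta t$ is a true martingale, and with $\tau:=\inf\{t>0: Y_t\in\mathbb{C}\setminus[0,\infty)\}$ and $\tau_n:=\tau\wedge\tfrac1n$, optional stopping gives $\E[Y_{\tau_n}]=\delta\,\E[\tau_n]$; since $Y_{\tau_n}\ge 0$ and $\delta<0$, this forces $\tau=0$. Your ``no return'' step via the monotone imaginary part of $\sqrt{Y}$ matches the paper's.

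The real gap is in uniqueness, and you correctly flag it but do not close it --- and closing it is the crux of the theorem, not a technicality. Integrating your linear ODE gives
\[ H_t-\tilde H_t=(H_{t_0}-\tilde H_{t_0})\exp\!\left(-\int_{t_0}^{t}\frac{\delta-1}{2H_r\tilde H_r}\,dr\right), \]
and one would need this to vanish as $t_0\downarrow 0$. But $|H_r|,|\tilde H_r|\asymp\sqrt r$ near $0$ by Bessel scaling, so $|H_r\tilde H_r|^{-1}$ is of order $1/r$ and the integral diverges; the modulus of the exponential is $\exp\bigl(\tfrac{1-\delta}{2}\operatorname{Re}\int_{t_0}^{t}\frac{dr}{H_r\tilde H_r}\bigr)$, whose sign and rate you have not controlled, while $|H_{t_0}-\tilde H_{t_0}|$ is a priori only $O(\sqrt{t_0})$. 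The paper avoids this $0\cdot\infty$ competition by a different mechanism: $\operatorname{Im}(Y)$ and $\operatorname{Im}(\tilde Y)$ are martingales with strictly increasing quadratic variations, so by Dambis--Dubins--Schwarz and the accumulation of Brownian zeros at the origin there exist $s_n,\tilde s_n\downarrow 0$ with $Y_{s_n},\tilde Y_{\tilde s_n}\in(-\infty,0]$; hence $\sqrt{Y_{s_n}}$ and $\sqrt{\tilde Y_{\tilde s_n}}$ are purely imaginary and tend to $0$, and by the flow property $\sqrt{Y_t}=H(s_n,t,\sqrt{Y_{s_n}})$. Uniqueness then reduces to existence of the single limit $\lim_{(s,y)\to(0+,0+)}H(s,t,iy)$, which the uniform derivative estimate $\sup_{0\le s\le t\le T}|H'(s,t,iy)|\le C(\omega,T)\,y^{-\beta}$, $\beta<1$, of Proposition~\ref{moment-prop} delivers. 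That estimate --- uniform in $(s,t)$, which is the stated improvement over Rohde--Schramm --- is exactly the missing quantitative input your sketch identifies but does not produce.

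Your existence argument also falls short: tightness of $\{Y^\varepsilon\}$ plus a subsequential limit yields at best a \emph{weak} solution, not the continuous $\{\mathcal F_t\}$-adapted strong solution the theorem asserts --- adaptedness to the filtration of $B$ and being driven by this same $B$ are not automatic from weak convergence. The paper instead uses the derivative estimate to get almost-sure uniform convergence of $H(0,t,iy)$ as $y\downarrow 0$, so $Y_t:=H(0,t,0+)^2$ is adapted by construction, and one passes to the limit in $H(0,t,iy)^2=-y^2+2\int_0^t H(0,r,iy)\,dB_r+\delta t$ by dominated convergence for stochastic integrals.
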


The unique strong solutions $Y$ obtained in Theorem \ref{main-thm-1} will be called complex squared Bessel process of dimension $\delta <0$ started at $0$. Following [\cite{RY}-Chapter$11$], we will abbreviate it by $CBESQ^{\delta}(0)$. We define the solution $H$ to equation \eqref{complex-bessel} by $H_t := \sqrt{Y_t}$. We call $H$ the complex Bessel process of dimension $\delta < 0$ started at $0$ abbreviated by $CBES^{\delta}(0)$. A similar half plane valued solutions to Bessel SDEs has also been considered in \cite[Proposition 3.8]{D-M-S}, but they have only proved the existence and uniqueness of weak solutions. \\

\begin{remark}
We do not yet know for sure whether the solution $H$ is a semimartingale and whether it satisfies 
\[\int_0^t \frac{1}{|H_r|}dr < \infty \hspace{2mm}?\]

Comparing with real Bessel processes with dimension $\delta>0$ suggests that the above integral is finite at least for large $|\delta|$.
\end{remark}

The proof of Theorem \ref{main-thm-1} will be based on derivative estimates of Rohde-Schramm obtained in \cite{RS05}. Since $\sqrt{z}$ function on $\mathbb{C}\setminus (0,\infty)$ is not a $1/2$-H\"older function, Yamada-Watanabe Theorem does not apply. The basic idea behind proof of Theorem \ref{main-thm-1} is that the negative drift present in the equation \eqref{squared-bessel} will push the solution $Y$ away from the non-negative real axis and $\sqrt{Y}$ escapes to upper half plane. We then use the derivative estimates obtained in \cite{RS05} to conclude the proof, see Section \ref{main-proof} for details. When $\delta = 0$, equation \eqref{squared-bessel} doesn't have unique solution. One trivial solution is $Y\equiv 0$. One can also construct non-zero solutions $Y$ to \eqref{squared-bessel} by examining the SLE$_4$. We believe that this is very closely related to the work of Bass-Burdzy-Chen \cite{bass-chen} where they prove the uniqueness of strong solution to certain degenerate real SDEs under the assumption that the solution spends zero time at zero. Theorem \ref{main-thm-1} is also closely related to work of Krylov-R\"ockner \cite{krylov-rockner} which considers multidimensional SDEs with singular drifts. Equation \eqref{complex-bessel} can be viewed as a two dimensional SDE with singular drift.  A distinction between Theorem \ref{main-thm-1} and results in \cite{krylov-rockner} is that the noise term $B$ in \eqref{complex-bessel} is only one dimensional, see also \cite{smoothing-boundary} for a related work. \\

We also consider the stochastic flow associated to \eqref{complex-bessel} on the real line $\mathbb{R}$. More precisely, for each $ (s,t)\in \Delta :=\{(s,t)| 0\leq s \leq t < \infty\}$ and $x \in \mathbb{R}$, define  $H(s,t,x)$ as the solution to equation 
\begin{equation}\label{flow-eqn}
dH(s,t,x) =  dB_t + \frac{\delta -1}{2}\frac{1}{H(s,t,x)}dt, \hspace{2mm} H(s,s,x) = x.
\end{equation} 
When $x \neq 0$, \eqref{flow-eqn} admits a unique strong solution for $t\leq  T^{s,x}$, where $T^{s,x}$ is the first time the solution hits zero. For $t >T^{s,x}$, we use the strong Markov property of Brownian motion and define $H(s,t,x)= H(T^{s,x},t,0)$, where $H(T^{s,x},t,0)$ is taken to be the half plane solution as constructed in Theorem \ref{main-thm-1}. We thus obtain a random field $H= \{H(s,t,x)\}_{(s,t,x)\in \Delta\times \mathbb{R}}$ which is called the stochastic flow associated with equation the \eqref{complex-bessel}. Our second main result is the following theorem on the existence of a continuous modification of the field $H$.

\begin{theorem}\label{cont-flow}
There exists a modification $\tilde{H}$ of the field $H$ which is almost surely jointly continuous in $(s,t,x)\in \Delta \times \mathbb{R}$.
\end{theorem}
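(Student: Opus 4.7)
The plan is to apply a Kolmogorov-type continuity criterion on compact subsets of $\Delta \times \mathbb{R}$. Since the parameter space is three dimensional, it suffices to establish moment bounds of the form
\[
\E\bigl[|H(s,t,x) - H(s',t',x')|^p\bigr] \le C_p\bigl(|s-s'|^{\alpha_1(p)} + |t-t'|^{\alpha_2(p)} + |x-x'|^{\alpha_3(p)}\bigr),
\]
with exponents $\alpha_i(p)$ that exceed $3$ once $p$ is sufficiently large. Because the same Brownian motion $B$ drives \eqref{flow-eqn} for every initial datum $(s,x)$, I can split the difference along the three coordinate increments by the triangle inequality and estimate each factor separately.

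For the $t$-direction at fixed $(s,x)$, I would work with the squared process $Y(s,\cdot,x) := H(s,\cdot,x)^2$, which by Theorem \ref{main-thm-1} satisfies the nondegenerate SDE \eqref{equivalent-eqn} with $1/2$-H\"older diffusion coefficient and constant drift $\delta$. Burkholder--Davis--Gundy together with an a priori $L^q$ bound on $Y$ yields $\E[|Y(s,t,x) - Y(s,t',x)|^p] \le C_p |t-t'|^{p/2}$; composing with the H\"older-$1/2$ complex square root (continuous on the range of $Y$ for positive times by Theorem \ref{main-thm-1}(1), which keeps $Y$ off the branch cut $(0,\infty)$) gives $\E[|H(s,t,x) - H(s,t',x)|^p] \le C_p |t-t'|^{p/4}$. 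For the $s$-direction, I would use the flow identity $H(s,t,x) = H(s', t, H(s,s',x))$ valid for $s \le s' \le t$: this reduces $s$-continuity to the $t$-continuity just obtained (to control $|H(s,s',x) - x|$ as $s' \to s^+$) combined with the $x$-continuity below.

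The $x$-direction is the main obstacle. The singular drift $1/H$ defeats direct Gronwall arguments precisely where they are needed---when $x$ or $x'$ is near the origin---and for $x, x'$ on opposite sides of $0$ the two trajectories leave the real axis into the upper half plane at distinct, random times $T^{s,x}$ and $T^{s,x'}$, after which the half-plane continuation of Theorem \ref{main-thm-1} takes over. Here I would invoke the improved Rohde--Schramm derivative estimate underlying Theorem \ref{main-thm-1}: moments of $\partial_x H(s,t,x)$ can be controlled in terms of $\Im H(s,t,x)$ and the elapsed time, and integrating the resulting bound over the interval between $x$ and $x'$ (with the transition through the origin handled via the strong Markov property) produces
\[
\E\bigl[|H(s,t,x) - H(s,t,x')|^p\bigr] \le C_p\,|x - x'|^{\gamma(p)},
\]
with $\gamma(p) \to \infty$ as $p \to \infty$. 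Combining the three estimates and choosing $p$ large enough that every $\alpha_i(p) > 3$, Kolmogorov's criterion delivers the desired jointly continuous modification $\tilde H$ on $\Delta \times \mathbb{R}$.
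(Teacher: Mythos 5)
Your strategy is genuinely different from the paper's, and it has two gaps that would need real work to close---one of which the paper itself flags as the obstruction to a moment-based approach.

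First, the $t$-direction step claims to pass from an $L^p$ increment bound on $Y$ to one on $H=\sqrt{Y}$ by composing with ``the $1/2$-H\"older complex square root.'' The branch square root on $\mathbb{C}\setminus(0,\infty)$ is \emph{not} $1/2$-H\"older---the paper states this explicitly in the introduction when explaining why Yamada--Watanabe does not apply. Concretely, $1+i\epsilon$ and $1-i\epsilon$ are at distance $2\epsilon$, yet their branch square roots are approximately $1+i\epsilon/2$ and $-1+i\epsilon/2$, at distance close to $2$. Knowing from Theorem \ref{main-thm-1}(a) that $Y_r$ avoids $[0,\infty)$ for $r>0$ does not prevent $Y_t$ and $Y_{t'}$ from straddling the branch cut, so the composition step fails, and a moment bound on $Y$-increments does not transfer to $H$-increments. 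A direct moment estimate on $H$-increments via the singular SDE \eqref{complex-bessel} would require control of $\int |H_r|^{-1}\,dr$, which the paper remarks in its introduction it does not know how to obtain.

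Second, the $x$-direction is where the actual difficulty lives, and your sketch punts on it. The derivative estimate \eqref{crucial-estimate} controls $H'(s,t,iy)$ for $y>0$ on the imaginary axis; it is not a bound on $\partial_x H(s,t,x)$ for real $x$, and for $x,x'$ on opposite sides of zero the two real trajectories leave the axis at distinct random hitting times $T^{s,x}\neq T^{s,x'}$, so there is no single derivative to integrate across the origin. The paper's actual route sidesteps Kolmogorov entirely: it defines $\tilde{H}(s,t,0) := \lim_{y\to 0+}H(s,t,iy)$, whose existence uniformly in $(s,t)$ and joint $(s,t)$-continuity follow from \eqref{crucial-estimate}; it extends to real $x\neq 0$ by solving the ODE up to $T^{s,x}$ and setting $\tilde{H}(s,t,x)=\tilde{H}(T^{s,x},t,0)$ for $t>T^{s,x}$; and it reduces joint continuity of $\tilde{H}$ to joint continuity of $(s,x)\mapsto T^{s,x}$, which is Proposition \ref{continuity-hitting-time}---a nontrivial fact proved by a dedicated Borel--Cantelli argument that uses $\delta<0$ to get finite $p$-th moments of $T^{0,1}$ for some $p>1$. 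Your proposal never engages with the continuity of the hitting times, yet that is the central ingredient.
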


The motivation for defining $CBES^{\delta}(0)$ processes comes from SLE$_{\kappa}$ curves $\kappa \in (0,4)$, see \cite{lawler-book} for a detailed introduction to SLEs. It was proven in \cite{RS05} that SLE$_{\kappa}$ curve $\gamma$ exists and $\gamma$ is simple for $\kappa \leq 4$. Let $\gamma$ be SLE$_{\kappa}$ for $\kappa <4$ with the driving Brownian motion $\sqrt{\kappa}W_t$. If $\mathbf{H}_t := \mathbb{H}\setminus \gamma[0,t]$ and $f_t: \mathbb{H}\to \mathbf{H}_t$ be the conformal map such that $f_t(z) = z + O(1)$ as $z\to \infty$,  it is well known (see e.g. Lemma $2.1$ in \cite{shekhar-tran-wang}) that for $z\in \mathbb{H}$, $f_t(z + \sqrt{\kappa}W_t) = \sqrt{\kappa}h_t(z)$, where for $s\in [0,t]$ $h_s(z)$ solves the equation 
\begin{equation*}
dh_s(z) = dW_s^t + \frac{\delta-1}{2}\frac{1}{h_s(z)}ds, \hspace{2mm} h_0(z) = z\in \mathbb{H}
\end{equation*}
with $W_s^t = W_t - W_{t-s}$ and $\delta = 1-\frac{4}{\kappa}$. Let $B_t = W_1 - W_{1-t}$ be the time reversed Brownian motion. Then it follows easily after simple manipulations that $h_{u-1+t}(z)= H(1-t,u,z)$. It was proven in \cite{RS05} that 
\begin{equation}\label{RS-limit}
\gamma_t = \lim_{y\to 0+}f_t(iy + \sqrt{\kappa}W_t).
\end{equation}
From the uniqueness of strong solution to \eqref{squared-bessel}, it follows easily that $H(1-t,u,z) \to H(1-t,u,0)$ as $z\to 0$. In particular, it implies $\gamma_t = \sqrt{\kappa}H(1-t, 1, 0) = \sqrt{\kappa}\tilde{H}(1-t, 1, 0)$. Since $\gamma$ and $\tilde{H}$ are almost surely continuous, we obtain the following corollary. 

\begin{corollary}[SLEs as Stochastic Flows]\label{embedding}
For $\kappa \in (0,4)$ and $\delta = 1-\frac{4}{\kappa} $, the process $\{\sqrt{\kappa}\tilde{H}(1-t,1, 0)\}_{t\in [0,1]}$ has the same law as the chordal SLE$_{\kappa}$ in $\mathbb{H}$ restricted on the unit time interval $[0,1]$. 
\end{corollary}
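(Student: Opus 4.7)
The plan is to combine three ingredients already assembled in the paper: the Rohde--Schramm representation of the SLE trace as a boundary limit of the conformal map $f_t$, the time-reversal that converts the forward Loewner flow into a complex Bessel stochastic flow, and the strong uniqueness plus joint continuity supplied by Theorems \ref{main-thm-1} and \ref{cont-flow}.

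First, I would fix $t \in (0,1]$ and combine the identity $f_t(z+\sqrt{\kappa}W_t)=\sqrt{\kappa}h_t(z)$ recalled in the introduction with the time-reversal relation $h_{u-1+t}(z)=H(1-t,u,z)$ at $u=1$, which yields
\[
f_t(z+\sqrt{\kappa}W_t) \;=\; \sqrt{\kappa}\,H(1-t,1,z)\qquad\text{for all }z\in\mathbb{H}.
\]
Next, I would send $z=iy$ with $y\downarrow 0$. By the Rohde--Schramm limit \eqref{RS-limit}, the left-hand side converges almost surely to $\gamma_t$. For the right-hand side, the plan is to show that $H(1-t,1,iy)\to H(1-t,1,0)$ almost surely: since $H(1-t,\cdot,iy)$ solves the ordinary differential equation version of \eqref{flow-eqn} until it first meets $\mathbb{R}$, after which it is glued to the $CBES^{\delta}(0)$ evolution furnished by Theorem \ref{main-thm-1}, a coupling of $H(1-t,\cdot,iy)$ and $H(1-t,\cdot,0)$ under the same Brownian motion $B$ together with the strong uniqueness in Theorem \ref{main-thm-1} yields the convergence. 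This produces the pointwise identity $\gamma_t = \sqrt{\kappa}\,H(1-t,1,0) = \sqrt{\kappa}\,\tilde{H}(1-t,1,0)$ almost surely for each $t \in (0,1]$, with the boundary case $t=0$ immediate from $\gamma_0 = 0 = \tilde{H}(1,1,0)$.

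Finally, to upgrade this fixed-$t$ almost sure identity to equality of the two processes on $[0,1]$, I would use that $t\mapsto \gamma_t$ is almost surely continuous on $[0,1]$ by Rohde--Schramm, while $t\mapsto\tilde{H}(1-t,1,0)$ is almost surely continuous by Theorem \ref{cont-flow}. Two almost surely continuous processes that coincide almost surely at each point of a countable dense subset of $[0,1]$ must coincide identically on $[0,1]$, which gives equality of laws in $C([0,1],\mathbb{C})$ and completes the argument.

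The main subtlety I expect is the step $H(1-t,1,iy)\to H(1-t,1,0)$: the underlying SDE is singular precisely at the target point $0$, so this is not a routine consequence of ODE stability but genuinely relies on the strong uniqueness assertion of Theorem \ref{main-thm-1}. An alternative would be to appeal directly to the joint spatial continuity of the modification $\tilde{H}$ from Theorem \ref{cont-flow}, but spelling out the coupling makes the logical role of uniqueness explicit and clarifies why the two preceding theorems are naturally paired in producing the corollary.
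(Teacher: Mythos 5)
Your proposal reproduces the paper's own argument almost step for step: the fixed-$t$ identity $f_t(iy+\sqrt{\kappa}W_t)=\sqrt{\kappa}H(1-t,1,iy)$ via time reversal, the Rohde--Schramm limit on the left, the convergence $H(1-t,1,iy)\to H(1-t,1,0)$ on the right, and the upgrade from a fixed-$t$ almost sure identity to equality of continuous paths via a countable dense set. One small calibration: the actual engine behind the convergence $H(1-t,1,iy)\to H(1-t,1,0)$ is the derivative estimate \eqref{crucial-estimate} of Proposition~\ref{moment-prop}(b), which gives the Cauchy bound $|H(s,t,iy)-H(s,t,i\tilde y)|\le C(\tilde y^{1-\beta}-y^{1-\beta})$ uniformly in $(s,t)$ and hence existence of the limit; the strong uniqueness of Theorem~\ref{main-thm-1} is what then identifies that limit with the CBES solution $H(1-t,1,0)$ (and, by construction, $\tilde H(s,t,0)$ is defined to be exactly this $y\to 0+$ limit, so the appeal to Theorem~\ref{cont-flow} as an ``alternative'' is really the same mechanism repackaged). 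Your phrase ``coupling plus strong uniqueness'' slightly understates the role of the derivative bound, but this matches the level of detail in the paper's own one-line justification, so there is no gap.
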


As we can see in the above argument, the $CBES^{\delta}(0)$ process was constructed precisely to give a characterization of the limit \eqref{RS-limit} by giving a canonical self contained meaning to equation 
\eqref{complex-bessel} started from zero. In the limit \eqref{RS-limit}, the point zero is approached only from vertical direction (non-tangential limit). However, the point zero does not distinguish between different rays in $\mathbb{H}$ approaching to zero at different angles (tangential limit) and the uniqueness of solution to \eqref{complex-bessel} started from zero is slightly stronger than the existence of the limit \eqref{RS-limit}. The uniqueness of solution to \eqref{complex-bessel} started from zero implies that we do have equivalence of non-tangential and tangential limit. This is no longer true in general situations. \\ 

We do not yet have a description of SLE$_{\kappa}$ using a SDE of type \eqref{squared-bessel} for $\kappa \geq 4$ or $\delta\geq 0$ because the uniqueness of solution to \eqref{squared-bessel} fails. The equivalency between \eqref{squared-bessel} and \eqref{equivalent-eqn} is also no longer true for $\delta \geq 0$. For $\kappa >4$, it is known that SLE$_{\kappa}$ is a non-simple curve. Thus, the solution $Y$ to \eqref{squared-bessel} which describes SLE$_{\kappa}$ for $\kappa >4$ will stay on positive real axis with positive probability and it is interesting to ask which branch square root of $Y$ appearing in \eqref{squared-bessel} is suitable for such cases. We plan to investigate further in this direction in our future projects.  \\


\textbf{Acknowledgments:} AS would like to thank Yilin Wang and Christophe Garban for various fruitful discussions. VM acknowledges the support of NYU-ECNU Institute of Mathematical Sciences at NYU Shanghai.

\section{Proof of Theorem \ref{main-thm-1}.}\label{main-proof}

For the proof of the Theorem \ref{main-thm-1}, it will be beneficial to consider equation \eqref{flow-eqn} started from $z\in \mathbb{H}$. Note that when $z\in \mathbb{H}$, $Im(H(s,t,z))$ is strictly increasing in $t$ and it stays positive. Thus, \eqref{flow-eqn} admits a unique ODE solution $H(s,t,z)$ for all time $t\geq s$. The proof of Theorem \ref{main-thm-1} will be based on following proposition. Let $H'(s,t,iy) = \partial_y H(s,t,iy)$. For a continuous martingale $M$, $[M]$ denotes its quadratic variation process.

\begin{proposition}\label{moment-prop}For some constant $\lambda>2$ depending only on $\delta<0$, the following holds:
\begin{enumerate}
\item For each fixed $s\geq 0$ and $y>0$, there exists a continuous martingale $\{M_t\}_{t\geq s}$ adapted to the filtration of $\{B_t-B_s\}_{t\geq s}$ such that $M_s=0$, and  
\begin{equation}\label{exp-martingale}
|H'(s,t,iy)|^{\lambda}\leq \exp\biggl\{M_t - \frac{1}{2}[M]_t\biggr\}.
\end{equation}
In particular, for all $K\geq 1$ and $T>0$,
\begin{equation}\label{weak-tail}
\mathbb{P}\bigl[\sup_{t\in[s,T]} |H'(s,t,iy)| \geq  K\bigr] \leq \frac{1}{K^{\lambda}}.
\end{equation}

\item For all $T> 0$, almost surely there exist constants $C(\omega,T)$ and $\beta \in (0,1)$ depending only on $\delta<0$ such that
\begin{equation}\label{crucial-estimate}
\sup_{0\leq s \leq t\leq T}|H'(s,t,iy)| \leq C(\omega, T)y^{-\beta} \hspace{2mm} \mbox{for all}\hspace{2mm}y \in (0,1].
\end{equation}
\end{enumerate}
\end{proposition}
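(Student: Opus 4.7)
The plan is to express $|H'(s,t,iy)|^\lambda$ as a positive exponential supermartingale of the form $\exp\{M_t - \tfrac12[M]_t\}$. Differentiating \eqref{flow-eqn} in $z$ shows that $H'(s,t,iy)$ satisfies the random ODE $\partial_t H' = -\tfrac{\delta-1}{2}\, H'/H^2$, so with $H_t = H(s,t,iy) = X_t + iY_t$, the modulus $\log|H'(s,t,iy)|$ is a pure drift depending only on $(X_r^2-Y_r^2)/(X_r^2+Y_r^2)^2$. It\^o's formula simultaneously yields nice expressions for $d\log|H_t|$ (which carries the only martingale part $\tfrac{X_t}{X_t^2+Y_t^2}\,dB_t$) and for $d\log Y_t$ (pure drift, and positive, so $Y_t$ is increasing for $\delta<0$). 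Following the Rohde--Schramm strategy, I would consider
\[
\Psi_t := |H'(s,t,iy)|^\lambda\, Y_t^a\, |H_t|^b
\]
and ask that $\log \Psi_t = M_t - \tfrac12[M]_t$ for the natural choice $M_t := b\int_s^t \tfrac{X_r}{X_r^2+Y_r^2}\,dB_r$. Matching drifts produces the quadratic $b^2 - 2b(2-\delta) + 2\lambda(1-\delta)=0$ together with the linear side condition $a(1-\delta) = \lambda(1-\delta) - b(2-\delta)$. Its discriminant is nonnegative iff $\lambda \leq (2-\delta)^2/(2(1-\delta))$, a quantity strictly greater than $2$ for every $\delta<0$; I would fix any such $\lambda>2$ and take the smaller root, giving $b>0$ and $a+b = \lambda - b/(1-\delta) > 0$. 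Since $Y_t\geq y$ and $|H_t|\geq Y_t$ along the trajectory, a short computation gives $Y_t^a|H_t|^b\geq y^{a+b}$, and combining this with $\Psi_s = y^{a+b}$ yields $|H'(s,t,iy)|^\lambda \leq \exp\{M_t - \tfrac12[M]_t\}$. The tail bound \eqref{weak-tail} is then Doob's maximal inequality applied to the positive supermartingale $\exp\{M_t-\tfrac12[M]_t\}$.

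\textbf{Part (b).} For the uniform estimate I would run a dyadic chaining. For $y_n = 2^{-n}$ and each fixed $(s,t)$, Part (a) gives $\P[\sup_{r\leq T}|H'(s,r,iy_n)|\geq 2^{n\alpha}]\leq 2^{-n\alpha\lambda}$. Summing over a dyadic mesh $G_n\subset\{0\leq s\leq t\leq T\}$ and applying Borel--Cantelli shows, for $\alpha$ slightly above a threshold determined by $\lambda$ and the growth rate of $|G_n|$, that $\max_{(s,t)\in G_n}\sup_{r\leq t}|H'(s,r,iy_n)|\leq 2^{n\alpha}$ for all but finitely many $n$ almost surely. To pass to all $(s,t,y)$ I would use two ingredients: first, since $z\mapsto H(s,t,z)$ is a conformal self-map of $\mathbb{H}$, Koebe's distortion theorem makes $|H'(s,t,iy)|$ comparable to $|H'(s,t,iy_n)|$ uniformly in $y\in[y_{n+1},y_n]$; second, joint continuity of $H'$ in $(s,t)$ (derived from the ODE for $H'$ in $t$ together with the cocycle identity $H'(s,t,z) = H'(s',t,H(s,s',z))\cdot H'(s,s',z)$) lets one pass from the grid to every $(s,t)$ by a further finite union bound and a distortion estimate at the intermediate points $H(s,s',iy)$.

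The main obstacle is not Part (a), which becomes mechanical once the auxiliary $\Psi_t$ is guessed, but rather ensuring in Part (b) that the chaining works uniformly in all three parameters with a single exponent $\beta<1$. The uniformity in $s$ is new compared to the classical Rohde--Schramm setting, and the admissible range of $\lambda>2$ shrinks to a single point as $\delta\uparrow 0$, forcing $\beta$ to depend sensitively on $\delta$ and making the exponent budget tight when $|\delta|$ is small.
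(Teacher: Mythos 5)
Your proposal is correct and follows essentially the same route as the paper: in part (a) you reconstruct by hand the It\^o identity that the paper imports from Lawler (your quadratic in $b$ and the admissible bound $\lambda\le (2-\delta)^2/(2(1-\delta))$ match the paper's $\lambda = \theta(1+\tfrac{1}{2a})-\tfrac{\theta^2}{4a}$, $\zeta=\theta-\tfrac{\theta^2}{4a}$ with $\theta\in(2,4a)$ exactly), and in part (b) you run the same Whitney-mesh Borel--Cantelli plus distortion chaining via the flow identity. The only cosmetic differences are that the paper obtains the tail bound \eqref{weak-tail} via Dambis--Dubins--Schwarz and the exponential law of $\sup_t(\tilde B_t - t/2)$ rather than quoting Doob's supermartingale maximal inequality, and it makes your ``distortion at intermediate points'' step quantitative with the explicit bounds on $|H(s,s_{n,k},iy_n)|/y_n$ supplied by Lemma~\ref{tran-estimate} and L\'evy's modulus of continuity.
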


An estimate similar to \eqref{crucial-estimate} has also been obtained in \cite{RS05}, but the estimate in \cite{RS05} is not uniform in $t$ as compared to \eqref{crucial-estimate}. The proof of Proposition \ref{moment-prop} is deferred until section \ref{moment-section}.

\begin{proof}[Proof of Theorem \ref{main-thm-1}-(a)]
Let 
\[\tau = \inf\{t >0 | Y_t \in \mathbb{C}\setminus [0, \infty)\}.\]
Then $\tau$ and $\tau_n = \tau \wedge \frac{1}{n}$ are stopping times. Also note that since $|\sqrt{Y_t}^b| = \sqrt{|Y_t|}$, using Burkholder-Davis-Gundy and Cauchy-Schwarz inequality it follows that for some constant $C < \infty$
\begin{equation}
\mathbb{E}[\sup_{t\in [0,T]}|Y_t|] \leq C(1 + \sqrt{\mathbb{E}[\sup_{t\in [0,T]}|Y_t|]}).
\end{equation}
It implies $\mathbb{E}[\sup_{t\in [0,T]}|Y_t|] < \infty$ and $Y_t -\delta t$ is a true martingale. Thus by Doob's optional sampling theorem $\mathbb{E}[Y_{\tau_n}] -\delta \mathbb{E}[\tau_n] = 0$. Also, on the event $\{\tau > 0\}$, $Y_t \in [0, \infty)$ for all $t \in [0, \tau]$ and thus $\mathbb{E}[Y_{\tau_n}] \geq 0$. But since $\delta <0$, it implies $\mathbb{E}[\tau_n] = 0$. Consequently $\tau = 0$. Since solution to \eqref{flow-eqn} with starting point $z\in \mathbb{H}$ stays in $\mathbb{H}$, if follows that $Y_t \in \mathbb{C}\setminus [0, \infty)$ for $t >0$.\\

\end{proof}

\begin{proof}[Proof of Theorem \ref{main-thm-1}-(b)] 
If $Y, \tilde{Y}$ satisfy \eqref{equivalent-eqn}, then $Im(Y_t)$ and $Im(\tilde{Y}_t)$ are both martingales starting from $0$ with quadratic variation processes given by
\[ [Im(Y)]_t = 4 \int_0^t Im(\sqrt{Y}_r)^2dr, \hspace{2mm}[Im(\tilde{Y})]_t = 4 \int_0^t Im(\sqrt{\tilde{Y}}_r)^2dr.\]
Note that since both $Im(\sqrt{Y}_t)$ and $Im(\sqrt{\tilde{Y}_t})$ are strictly increasing process, $[Im(Y)]_t$ and $[Im(\tilde{Y})]_t$ are strictly increasing as well. Since martingales are time change of Brownian motion and zero set of Brownian motion has no isolated points, there exist sequence $s_n, \tilde{s}_n \to 0+$ such that $Im(Y_{s_n}) = Im(\tilde{Y}_{\tilde{s}_n}) =0$, i.e. $Y_{s_n}, \tilde{Y}_{\tilde{s}_n} \in (-\infty, 0)$. Using the flow property, for $t>0$ and $n$ large enough, $\sqrt{Y_t} = H(s_n, t, \sqrt{Y_{s_n}})$ and $\sqrt{\tilde{Y}_t} = H(\tilde{s}_n, t, \sqrt{\tilde{Y}_{\tilde{s}_n}})$. We now claim that almost surely 
\begin{equation}\label{crucial-limit}
\lim\limits_{(s,y) \to (0+, 0+)}H(s,t, iy)
\end{equation}
exists. Assuming this and the fact that $\sqrt{Y_{s_n}},\sqrt{\tilde{Y}_{\tilde{s}_n}}$ are purely imaginary and tends to $0$ as $n\to \infty$ easily implies $Y_t = \tilde{Y_t}$ almost surely. Since $Y, \tilde{Y}$ are continuous processes, claim \eqref{equality} follows easily. For proving the existence of limit \eqref{crucial-limit} we use the estimate \eqref{crucial-estimate}. For $y < \tilde{y}$,
\begin{equation}
|H(s,t,iy) - H(s, t, i\tilde{y})| \leq \int_y^{\tilde{y}}|H'(s,t,ir)|dr \leq C (\tilde{y}^{1-\beta} - y^{1-\beta}). 
\end{equation} 
Thus $H(s,t, iy)$ converges uniformly in $s,t$ as $y \to 0+$ to some function continuous in $s,t$. This implies the limit \eqref{crucial-limit} exists which completes the proof.  \\
For the existence of a solution to \eqref{squared-bessel}, note that using the same argument as above, almost surely $H(0,t,0+):= \lim\limits_{y \to 0+}H(0,t,iy)$ exists uniformly in $t$. Let $Y_t := H(0,t,0+)^2$. Note that 
\[H(0,t,iy)^2 = -y^2 + 2\int_0^t H(0,r,iy)dB_r  + \delta t.\]
Letting $y \to 0+$ in above and using the dominated convergence theorem for stochastic integrals implies that $Y$ satisfies \eqref{squared-bessel} which finishes the proof. 

\end{proof}

\section{Proof of Proposition \ref{moment-prop}.}\label{moment-section}

Let us set some notations first. Set $a = \frac{1-\delta}{2}$ and keeping $s,y$ fixed, let $H(s,t,iy) = U_t + i V_t$. Equation \eqref{flow-eqn} then equivalently reads as 
\begin{equation}\label{u,v}
dU_t = dB_t - \frac{aU_t}{U_t^2 + V_t^2}dt, \hspace{2mm} dV_t = \frac{aV_t}{U_t^2 + V_t^2}dt, \hspace{2mm} U_s=0, V_s=y.
\end{equation} 
Following computations are taken from \cite{lawler09} and recited here for readers' convenience, see \cite[Proposition $2.1$]{lawler09} for details. Differentiating \eqref{flow-eqn} both side w.r.t. $z$ shows that 
\begin{equation}\label{derivative-identity}
|H'(s,t,iy)| = \exp\biggl\{\int_s^t\frac{a(U_r^2 - V_r^2)}{(U_r^2 + V_r^2)^2}dr\biggr\}.
\end{equation}

An It\^o formula based computation implies that 

\begin{equation}\label{ito-computation}
\int_s^t\frac{\lambda a(U_r^2 - V_r^2)}{(U_r^2 + V_r^2)^2}dr= M_t  - \frac{1}{2}[M]_t + \log\biggl\{\biggl(\frac{y}{V_t}\biggr)^{\zeta}\biggl(1+ \frac{U_t^2}{V_t^2}\biggr)^{\frac{-\theta}{2}}\biggr\},
\end{equation} 
where $\lambda, \zeta$ and $\theta$ are related by $\lambda = \theta\bigl(1 + \frac{1}{2a}\bigr) - \frac{\theta^2}{4a}$, $\zeta= \theta -\frac{\theta^2}{4a}$ and 
\[M_t = \theta \int_s^t \frac{U_r}{U_r^2 + V_r^2}dB_r.\]\\

We will also need the following Lemma.

\begin{lemma}\label{distortion} If $F: \mathbb{H} \to \mathbb{C}$ is an injective holomorphic map and $z, w \in \mathbb{H}$ with $Im(z), Im(w) \geq y >0 $, then 
\[ |F'(w)| \leq 144^{\frac{|z-w|}{y} + 1}|F'(z)|.\]
\end{lemma}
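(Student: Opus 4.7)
The plan is to chain the Koebe distortion theorem along the straight segment from $z$ to $w$. Recall that if $F$ is injective and holomorphic on a disk $D(z_0, R) \subset \mathbb{C}$, then the rescaled map $\zeta \mapsto (F(z_0 + R\zeta) - F(z_0))/(R F'(z_0))$ is a standard univalent function on the unit disk, so the classical Koebe distortion estimate gives
\[ \frac{|F'(z_0 + R\zeta)|}{|F'(z_0)|} \leq \frac{1+|\zeta|}{(1-|\zeta|)^3} \quad \text{for } |\zeta| < 1. \]
Specializing to $|\zeta| \leq 1/2$ yields $|F'(u)|/|F'(z_0)| \leq 12$ for every $u \in D(z_0, R/2)$.

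Next, observe that the straight segment from $z$ to $w$ lies inside $\{\zeta \in \mathbb{C} : Im(\zeta) \geq y\}$, since the imaginary part is affine along a segment and both endpoints satisfy this bound. In particular, for every $\zeta$ on the segment the disk $D(\zeta, y)$ is contained in $\mathbb{H}$, so $F$ is univalent on $D(\zeta, y)$. Partition the segment into $n := \lceil 2|z-w|/y \rceil$ consecutive sub-segments of length at most $y/2$, giving points $z = z_0, z_1, \ldots, z_n = w$ with $z_{k+1} \in D(z_k, y/2)$. Applying the estimate from the previous paragraph with the disk $D(z_k, y)$ then gives $|F'(z_{k+1})|/|F'(z_k)| \leq 12$ for every $k$.

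Telescoping these inequalities yields $|F'(w)|/|F'(z)| \leq 12^n$. Since $n \leq 2|z-w|/y + 1$, one has $12^n \leq 12 \cdot 144^{|z-w|/y} \leq 144^{|z-w|/y + 1}$, which is precisely the claimed bound. The only subtle point is ensuring the connecting path stays far enough from $\partial \mathbb{H}$ to apply Koebe at scale $y$; this is automatic from the linearity of $Im$ along a line segment. Apart from this observation, the argument is a routine application of the distortion theorem and I anticipate no significant obstacle.
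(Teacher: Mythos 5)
Your proof is correct and is the standard chaining-of-Koebe argument; the paper itself does not give a proof and instead cites Chapter~4 of \cite{lawler-book}, where essentially this same derivation appears. Your only moving parts are the classical distortion estimate $\frac{1+r}{(1-r)^3}\leq 12$ at $r=1/2$, the observation that $\operatorname{Im}$ is affine along the segment $[z,w]$ so the entire segment stays at height $\geq y$, and the arithmetic $12^n\leq 12\cdot 144^{|z-w|/y}\leq 144^{|z-w|/y+1}$ with $n=\lceil 2|z-w|/y\rceil$; all three checks are sound.
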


\begin{proof} See [\cite{lawler-book}-Chapter $4$].
\end{proof}

\begin{lemma}\label{tran-estimate}
We have the following estimates. 
\[|U_t| \leq 2\sup_{r \in [s,t]}|B_r- B_s|,  V_t \leq \sqrt{y^2 + 2a(t-s)}. 
\]
\end{lemma}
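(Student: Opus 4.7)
The plan is to prove the two inequalities separately; each reduces to a purely pathwise argument on a fixed realization of $B$. I would start with the easier bound on $V_t$. From \eqref{u,v}, $V$ solves a pathwise ODE with $V_s = y > 0$ and strictly positive drift, so $V$ stays positive. Then $V_t^2$ is differentiable with $\frac{d}{dt}V_t^2 = 2aV_t^2/(U_t^2+V_t^2) \leq 2a$, and integrating from $s$ to $t$ immediately gives $V_t^2 \leq y^2 + 2a(t-s)$.

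The bound on $U_t$ is the interesting part. I would set $\tilde B_r := B_r - B_s$, $M := \sup_{r \in [s,t]} |\tilde B_r|$, and introduce
\[f(r) := U_r - \tilde B_r = -\int_s^r \frac{a U_u}{U_u^2+V_u^2}\,du,\]
which, from \eqref{u,v}, is $C^1$ (the Brownian part of $U$ is absorbed into $\tilde B$, and $V_u>0$ keeps the integrand bounded) with $f'(r) = -aU_r/(U_r^2+V_r^2)$. The key observation is that $f$ is strictly decreasing wherever $U > 0$ and strictly increasing wherever $U < 0$ — the drift of $U$ always points toward the origin.

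Then I would argue by contradiction. Suppose some $t_1 \in [s,t]$ has $U_{t_1} > 2M$, and let $\tau := \sup\{r \in [s,t_1] : U_r \leq 0\}$. This set contains $s$ (since $U_s = 0$) and is closed by continuity of $U$, so the supremum is attained with $U_\tau \leq 0$; combined with $U_\tau \geq 0$ (from the right-limit of $U > 0$ on $(\tau,t_1]$) this forces $U_\tau = 0$. Since $U > 0$ on $(\tau,t_1]$, $f$ is decreasing there, so
\[U_{t_1} - \tilde B_{t_1} = f(t_1) \leq f(\tau) = -\tilde B_\tau \leq M,\]
yielding $U_{t_1} \leq 2M$, a contradiction. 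The case $U_{t_1} < -2M$ is ruled out by the symmetric argument using the last time $U \geq 0$ before $t_1$.

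The main (and minor) obstacle, I expect, is verifying that $\tau$ really satisfies $U_\tau = 0$ rather than only $U_\tau \leq 0$; once that is in place the argument is a one-line sign-of-drift comparison, and no probabilistic input beyond the definition of $M$ is needed.
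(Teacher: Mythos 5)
Your proof is correct, and it is worth noting that the paper itself does not supply an argument for this lemma (it just cites \cite{RTZ}), so your write-up is a genuinely self-contained derivation. The bound on $V$ is exactly the standard computation: $\frac{d}{dt}V_t^2 = 2aV_t^2/(U_t^2+V_t^2) \le 2a$. For the bound on $U$, your sign-of-drift argument is sound. The one point you flag as a potential obstacle — that $\tau := \sup\{r\in[s,t_1]: U_r\le 0\}$ satisfies $U_\tau = 0$ — does hold: $U_\tau \le 0$ because the set is closed and nonempty (it contains $s$), and $U_\tau \ge 0$ because $U>0$ on $(\tau,t_1]$ and $U$ is continuous, so $U_\tau = \lim_{r\downarrow\tau}U_r \ge 0$. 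Then $f(r)=U_r-\tilde B_r$ is $C^1$ (since $V_r\ge y>0$ keeps the drift bounded) and strictly decreasing on $(\tau,t_1]$, giving $U_{t_1}-\tilde B_{t_1} \le -\tilde B_\tau$ and hence $U_{t_1}\le \tilde B_{t_1}-\tilde B_\tau \le 2M$; the symmetric argument handles $U_{t_1}<-2M$. This is the natural elementary proof and is presumably close to what appears in \cite{RTZ}, but since the paper leaves the lemma as a citation, including your argument would make the exposition self-contained.
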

\begin{proof}See \cite{RTZ}.
\end{proof}

 \begin{proof}[Proof of Proposition \ref{moment-prop}]

Choose $\theta \in (2, 4a)$. Then $\zeta > 0$ and $\lambda > 2$. Note that $V_t$ is monotonic increasing and $V_t \geq y$. Thus, 
\[\log\biggl\{\biggl(\frac{y}{V_t}\biggr)^{\zeta}\biggl(1+ \frac{U_t^2}{V_t^2}\biggr)^{\frac{-\theta}{2}}\biggr\} \leq 0,\]
and \eqref{derivative-identity},\eqref{ito-computation} implies \eqref{exp-martingale}. For \eqref{weak-tail}, note that by Dambis-Dubins-Schwarz martingale embedding theorem, there exists a Brownian motion $\tilde{B}$ such that $M_t = \tilde{B}_{[M]_t}$. Thus,
\[\sup_{t\in[s,T]}\biggl(M_t - \frac{[M]_t}{2}\biggr) = \sup_{t\in[s,T]}\biggl(\tilde{B}_{[M]_t} - \frac{[M]_t}{2}\biggr) \leq  \sup_{t\in[0, \infty)}\biggl(\tilde{B}_t -\frac{t}{2}\biggr).\]
It is well known that $\sup_{t}\bigl(\tilde{B}_t -\frac{t}{2}\bigr)$ is distributed as an exponential random variable with parameter $1$ and \eqref{exp-martingale} then easily implies \eqref{weak-tail}. \\

The proof of \eqref{crucial-estimate} follows from \eqref{weak-tail} using the following Borel-Cantelli argument. The following argument is similar to the argument in \cite{optimal-Holder}, but the following argument is simpler because we have avoided the use of Bieberbach's Theorem, see \cite[Lemma 3.5]{optimal-Holder}.\\
 Choose a Whitney type discretization of $(s,y)\in [0,T]\times (0,1]$ given by $s_{n,k} = k2^{-2n}T, y_n = 2^{-n}$ for $n\geq 1$ and $1\leq k\leq 2^{2n}$. Then, for $\beta \in (\frac{2}{\lambda},1)$, \eqref{weak-tail} implies  
\[\sum_{n=1}^{\infty}\sum_{k=1}^{2^{2n}}\mathbb{P}\bigl[\sup_{t\in [s_{n,k},T]}|H'(s_{n,k},t,y_n)| \geq 2^{n\beta}\bigr] \leq \sum_{n=1}^{\infty} \frac{1}{2^{(\beta\lambda -2)n}} < \infty.\]
It follows using Borel-Cantelli Lemma that almost surely for $n$ large enough and for all $1\leq k\leq 2^{2n}$, 
\begin{equation}\label{discrete}
\sup_{t\in [s_{n,k},T]}|H'(s_{n,k},t,y_n)| \leq 2^{n\beta}.
\end{equation}

In order to get the uniform in $(s,y)\in[0,T]\times (0,1]$ estimate \eqref{crucial-estimate} from \eqref{discrete}, we will use Lemma \ref{distortion} as follows. If $t-s \leq y_n^2$, then 
\begin{equation}\label{trivial-estimate} |H'(s,t,iy_n)| = \exp\biggl\{\int_s^t\frac{a(U_r^2 - V_r^2)}{(U_r^2 + V_r^2)^2}dr\biggr\} \leq \exp\biggl\{\frac{a(t-s)}{y_n^2}\biggr\} \leq C.  
\end{equation}

For $t-s > y_n^2$, choose the least $k$ such that $s_{n,k}\geq s$. Then using the flow property and \eqref{trivial-estimate} again, 
\begin{equation}
|H'(s,t,iy_n)| = |H'(s_{n,k},t,H(s,s_{n,k}, iy_n))||H'(s,s_{n,k},iy_n)| \leq C  |H'(s_{n,k},t,H(s,s_{n,k}, iy_n))|.
\end{equation}
Using Lemma \ref{distortion}, 
\begin{equation}
|H'(s_{n,k},t,H(s,s_{n,k}, iy_n))| \leq C (144)^{|H(s,s_{n,k}, iy_n)|/y_n} |H'(s_{n,k},t,iy_n)| \leq C (144)^{|H(s,s_{n,k}, iy_n)|/y_n}2^{n\beta} .
\end{equation}
Note using Lemma \ref{tran-estimate} and L\`evy modulus of continuity for Brownian motion that \[Im(H(s,s_{n,k}, iy_n)) \leq \sqrt{y_n^2 + 2a (s_{n,k}-s)} \leq Cy_n,\]
and 
\[|Re(H(s,s_{n,k}, iy_n))| \leq 2 \sup_{r \in [s,s_{n,k}]} |B_r - B_s| \leq C \sqrt{-(s_{s,k}-s)\log(s_{n,k}-s)}\leq C\sqrt{n}y_n,\]
which implies that $|H(s,s_{n,k}, iy_n)|/y_n \leq C\sqrt{n}$ and at the cost of choosing a slightly larger $\beta$, we obtain 
\[|H'(s,t,iy_n)| \leq C2^{n\beta}.\]
To get uniformity in $y$, choose $n$ such that $y_{n+1} \leq y \leq y_n$ and applying Lemma \ref{distortion} again, 
\[ |H'(s,t,iy)| \leq C(144)^{(y_n-y)/y_{n+1}}|H'(s,t,iy_n)| \leq C2^{n\beta},\]
which proves the claim.  

\end{proof}

\section{Proof of Theorem \ref{cont-flow}}

In this section we will prove the existence of a continuous modification of the stochastic flow $H(s,t,x)$ defined above. The estimate \eqref{crucial-estimate} will again serve as a key component in the proof. Another key component in the proof is the continuity of the level zero hitting times for Bessel processes with respect to starting points. Define 
\[T^{s,x}:= \inf\{t \geq s | H(s,t,x) =0\}.\]
In particular $T^{s,0} = s$.
For $x \in \mathbb{R}$ and $t \leq T^{s,x}$, $H(s,t,x)$ are standard Bessel processes with negative dimension $\delta <0$. If follows from standard results on Bessel processes and comparison theorems on ODEs that almost surely $T^{s,x} < \infty$ for all $s,x$ and almost surely for all $x\neq y$, $T^{s,x} \neq T^{s,y}$. We further claim the following proposition. 

\begin{proposition}\label{continuity-hitting-time} Let $\delta< 0$. Then,
\begin{enumerate}
\item Almost surely $T^{s,x} -s \to 0$ as $x \to 0$ for all $s\in [0,\infty)$.

\item The function $(s,x) \mapsto T^{s,x}$ is almost surely jointly continuous in variables $(s,x) \in [0,\infty)\times \mathbb{R}$.
\end{enumerate}
\end{proposition}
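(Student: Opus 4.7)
My plan is to establish part (1) at each fixed $s \geq 0$ using Bessel scaling together with pathwise monotonicity in the initial condition, and then to lift this to a statement holding almost surely for all $s$ by exploiting the uniform derivative estimate \eqref{crucial-estimate} of Proposition \ref{moment-prop}. Part (2) will follow by combining (1) with classical continuous dependence for the real Bessel ODE.

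For (1) at a fixed $s$, pathwise comparison for the SDE \eqref{flow-eqn} driven by the common Brownian motion $B$ gives that $x \mapsto T^{s,x}$ is non-decreasing on $(0,\infty)$, and the Brownian rescaling $\tilde B_u := x^{-1}(B_{s+x^2u}-B_s)$ yields $T^{s,x}-s \stackrel{d}{=} x^2\eta$, where $\eta$ is the almost surely finite hitting time of $0$ by the $\delta$-dimensional real Bessel from $1$, whose law does not depend on $(s,x)$. Consequently the monotone limit $T^{s,0+} := \lim_{x \to 0+} T^{s,x} \geq s$ is stochastically dominated by $s + x^2 \eta$ for every $x > 0$, forcing $T^{s,0+} = s$ almost surely. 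The case $x < 0$ is handled symmetrically.

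The globalization over $s$ proceeds in two stages. First, applying the above to the countable dense set $D := \mathbb{Q} \cap [0,\infty)$ yields a single null set outside of which $T^{s,0+} = s$ for every $s \in D$. Second, \eqref{crucial-estimate} together with integration in $y$ gives $|H(s,t,iy_1) - H(s,t,iy_2)| \leq C|y_1^{1-\beta} - y_2^{1-\beta}|$ uniformly in $(s,t) \in \Delta$, so the limit $H(s,t,0+) := \lim_{y \to 0+} H(s,t,iy)$ exists uniformly in $(s,t)$ and is jointly continuous. For arbitrary $s \geq 0$ and rationals $s' \downarrow s$ in $D$, the flow identity $T^{s,x} = T^{s', H(s,s',x)}$ (valid whenever $T^{s,x} > s'$), the monotone convergence of $H(s,s',x)$ along positive real $x$ to some limit $h \geq 0$, and the fixed-$s'$ statement $T^{s',0+} = s'$ allow us to rule out the scenario $T^{s,0+} > s$: the case $h = 0$ would give $T^{s,0+} = s' \downarrow s$, and the case $h > 0$ is excluded by confronting the monotone real limit with the half-plane limit $H(s,s',0+) \in \mathbb{C}\setminus[0,\infty)$ and invoking the uniqueness in Theorem \ref{main-thm-1}(b), whose consequences for fixed $s'\in D$ propagate to all $s$ via the joint continuity of $(s,t) \mapsto H(s,t,0+)$.

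For part (2), on the open set $\{x \neq 0\}$ joint continuity of $T^{s,x}$ is classical: continuous dependence of the real Bessel ODE on $(s,x)$ together with transversal hitting of $0$ (the singular drift $-a/H$ forces $H$ to cross $0$ with unbounded speed, so hitting times vary continuously with initial data). At a point $(s_0, 0)$, $T^{s_0,0} = s_0$, and for any $(s_n,x_n) \to (s_0,0)$ the decomposition $|T^{s_n,x_n}-s_0| \leq (T^{s_n,x_n}-s_n) + |s_n-s_0|$, combined with the uniform-in-$s$ version of (1) obtained above, delivers continuity. The main obstacle I foresee is precisely the globalization in (1): a naive pathwise comparison of $T^{s,x}$ with the Brownian hitting time of $-x$ fails at random local minima of $B$, and Theorem \ref{main-thm-1}(b) only provides uniqueness for a single fixed $s$. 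Bridging this via \eqref{crucial-estimate}, so that both the existence of $H(s,t,0+)$ and its membership in $\mathbb{C}\setminus[0,\infty)$ hold uniformly in $s$, is the delicate technical point.
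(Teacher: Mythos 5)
Your fixed-$s$ argument for part (a) is fine and even a bit slicker than the paper's for that single claim: Bessel scaling gives $T^{s,x}-s \overset{d}{=} x^2\eta$, and the monotone limit $T^{s,0+}$ must then equal $s$ almost surely. However, the globalization over uncountably many $s$ — which you correctly flag as the delicate point — is not closed by the sketch you give. To rule out $h := \lim_{x\to 0+} H(s,s',x) > 0$ you need two things: (i) that the monotone real limit $h$ agrees with the nontangential limit $H(s,s',0+) := \lim_{y\to 0+} H(s,s',iy)$, and (ii) that $H(s,s',0+) \in \mathbb{H}$ (equivalently $H(s,s',0+)^2 \in \mathbb{C}\setminus[0,\infty)$) for the particular, possibly irrational, $s$ in question. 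Neither is available. Claim (ii) is known only for $s$ in the countable set $D$ — it is a consequence of Theorem \ref{main-thm-1}(a), whose null set a priori depends on $s$ — and joint continuity of $(s,t)\mapsto H(s,t,0+)$ does \emph{not} propagate membership in the open set $\mathbb{H}$ from $D$ to all $s$: a continuous function can hit the boundary $\mathbb{R}$ on the complement of a dense set. Claim (i) is an equality of limits taken along the positive real axis and along the imaginary axis, which is exactly the kind of tangential-versus-nontangential identification the paper singles out as nontrivial; it is not automatic, and you give no argument for it. The paper sidesteps these issues entirely: it never touches the complex flow at irrational $s$, but instead builds for each $\epsilon_n = 2^{-n}$ the chain $s_{n,m+1}=T^{s_{n,m},\epsilon_n}$, bounds the gaps via the finite $p$-th moment of $T^{0,1}$ (from the explicit hitting-time density) plus Borel--Cantelli, shows via a weak LLN that the chain covers $[0,1]$ along a subsequence, and finally uses the pathwise comparison $T^{s,0+}-s \leq s_{n,m}-s_{n,m-1}$ for $s\in[s_{n,m-1},s_{n,m}]$. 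This chaining is the missing idea in your proposal.

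Your treatment of part (b) is also too loose. ``Classical continuous dependence for the real Bessel ODE'' does not apply here because the vector field $(\delta-1)/(2x)$ is singular at $x=0$, which is precisely where the solution lands at time $T^{s,x}$; and the heuristic that ``unbounded speed at $0$ implies continuity of hitting times'' is not a proof. The paper instead argues left-continuity of $x\mapsto T^{s,x}$ by a monotone-limit contradiction (a limit of solutions starting below $x$ hitting $0$ strictly before $T^{s,x}$ would itself be a solution from $x$), right-continuity via the flow identity $T^{y}-T^{x}=T^{T^{x},H(0,T^x,y)}$ combined with part (a), and then joint continuity in $(s,x)$ by flowing $x_n$ from time $s_n$ to time $s$ to reduce to the fixed-$s$ statement. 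Your reduction at $(s_0,0)$ using part (a) is fine, but the argument away from $x=0$ needs the paper's two one-sided continuity arguments rather than an appeal to classical theory.
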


\begin{proof}[Proof of Proposition \ref{continuity-hitting-time}-(a)]
W.l.o.g. we let $s$ vary over $s\in [0,1]$. Note that using scaling of Bessel processes, $T^{s,x} - s$ has the same distribution as $x^2(T^{s,1} - s)$ and $T^{s,1}-s$ has same distribution as $T^{0,1}$. It is well known that $T^{0,1}$ has the explicit density given by 
\[ C_{\delta}t^{\frac{\delta}{2}-2}\exp\{-1/2t\},\]
where $C_{\delta}$ is a normalizing constant, see Proposition $1.9$ in \cite{lawler-notes} for a detailed proof of this fact. It then follows that for $p < 1 -\frac{\delta}{2}$, $\mathbb{E}[(T^{0,1})^p] < \infty$. Now let $\epsilon_n = 2^{-n}$. For each $n\geq 1$, define sequences $\{s_{n,m}\}_{m\geq 0}$ by $s_{n,0} = 0$ and $s_{n,m+1} = T^{s_{n,m}, \epsilon_n}$. Choose constants $p \in (1,1- \frac{\delta}{2})$ and $\alpha, \beta$ such that $ \frac{2}{p} < 2\alpha < \beta < 2$. Consider the event 
\[A_n = \bigcup_{m=1}^\infty\{s_{n,m}- s_{n,m-1} > m^{\alpha}\epsilon_n^{\beta}\}.\]
Then using Markov inequality and the fact that $\mathbb{E}[(T^{0,1})^p] < \infty$,
\begin{align*}
\mathbb{P}[A_n] & \leq \sum_{m=1}^{\infty} \mathbb{P}[s_{n,m}- s_{n,m-1} > m^{\alpha}\epsilon_n^{\beta}]  \\
& \leq \sum_{m=1}^{\infty} \frac{\mathbb{E}[(s_{n,m}- s_{n,m-1})^p]}{m^{p\alpha}\epsilon_n^{p\beta}} \\
& = \sum_{m=1}^{\infty} \frac{\epsilon_n^{p(2 -\beta)}\mathbb{E}[(T^{0,1})^p]}{m^{p\alpha}}.
\end{align*}
Thus, $\sum_{n=1}^{\infty}\mathbb{P}[A_n] < \infty$ and Borel-Cantelli implies that almost surely there exists $N(\omega)$ such that for all $n\geq N(\omega)$, events $A_{n}$ do not occur, i.e.

\begin{equation}\label{cont-estimate}
\mbox{For all}\hspace{2mm}  n\geq N(\omega) \hspace{2mm} \mbox{and} \hspace{2mm} m\geq 1, \hspace{2mm} s_{n,m} - s_{n,m-1} \leq m^{\alpha}\epsilon_n^{\beta}.
\end{equation}

Also note using strong Markov property that sequence $\{s_{n,m}- s_{n,m-1}\}_{m\geq 1}$ is an i.i.d. sequence of random variables and 
\[s_{n,m} = \sum_{j=1}^{m}s_{n,j} - s_{n,j-1}\]
is a random walk. Pick an integer $M> 1/\mathbb{E}[T^{0,1}]$ and consider $\{s_{n,M2^{2n}}\}_{n\geq 1}$. We claim that $s_{n,M2^{2n}}$ converges in probability to $M\mathbb{E}[T^{0,1}]$ as $n \to \infty$. This is essentially weak law of large number (LLN), but since collection of i.i.d. random variables $\{s_{n,m}- s_{n,m-1}\}_{m\geq 1}$ is also changing with $n$, original form of weak LLN does not apply. One can however repeat the proof of weak LLN using characterstic functions similarly in this case also to conclude $s_{n,M2^{2n}}\overset{p}{\to} M\mathbb{E}[T^{0,1}] >1$. 
This implies almost surely  there exist a subsequence of $n_{k} \to \infty$ with $s_{n_{k},M2^{2n_{k}}} > 1$. Using \eqref{cont-estimate}, for $n_{k}$ large enough and $1\leq m\leq M2^{2n_{k}}$
\[T^{s_{n_{k},m-1},\epsilon_{n_{k}}} - s_{n_{k},m-1}= s_{n_{k},m} - s_{n_{k},m-1} \leq M^{\alpha}\epsilon_{n_{k}}^{\beta-2\alpha}.\]
Finally note that using comparison principle for ODE solutions, $T^{s,x}-s$ is monotonic increasing as $x$ increases and the limit $T^{s,0+}-s := \lim\limits_{x\to 0+} T^{s,x}-s$ exists. Similarly, if $s\in [s_{n,m-1}, s_{n,m}]$ then $T^{s,0+}-s \leq s_{n,m}-s_{n,m-1}$. If $s\in [0,1]$, then for each $n_{k}$, there exists $1\leq m\leq M2^{2n_{k}}$ such that $s\in [s_{n_{k},m-1}, s_{n_{k},m}]$ which implies $T^{s,0+}-s \leq M^{\alpha}\epsilon_{n_{k}}^{\beta-2\alpha}$. Letting $ n_{k} \to \infty$ implies $T^{s,0+}-s = 0$. By symmetry and similarly as above, $T^{s,0-}-s =0$ as well which concludes the proof. 

\end{proof}

\begin{proof}[Proof of Proposition \ref{continuity-hitting-time}-(b)]
We first prove that for each fixed $s$, $x\mapsto T^{s,x}$ is continuous. Since this argument is same for all $s$, we assume w.l.o.g. $s=0$ and write $T^{0,x} = T^{x}$. The left continuity of $T^x$ follows easily because if $T^{x-}:= \lim_{y\uparrow x} T^{y} < T^{x}$, then by taking the monotone limit as $y\uparrow x$ of solutions $H(0,t,y)$ starting at $y$, we can construct a solution starting from $x$ which hits zero at $T^{x-}$, which is contradiction by the definition of $T^{x}$. For the right continuity of $T^{x}$, let $y\downarrow x$. The by letting the solution $H(0,t,y)$ flow till time $t=T^{x}$, we obtain $T^y -T^x= T^{T^{x}, H(0,T^x, y)}$. Note that $H(0,T^{x},y) \to 0$ as $y\downarrow x$ and it follows from Proposition \ref{continuity-hitting-time}-(a) that $T^{T^{x}, H(0,T^x, y)} \to 0$ as $y\downarrow x$, which completes the proof. \\

Now for continuity of $T^{s,x}$, let $(s_n,x_n) \to (s,x)$. Depending on whether $s_n>s$ or $s_n<s$, we let the solution $H(s_n,t,x_n)$ flow either in backward or forward direction for time $t$ from $s_n$ to $s$. Then the points $x_n$ will move under this flow to new points, call then $y_n$. Then by definition $T^{s_n,x_n} = T^{s,y_n}$. Note that since $s_n$ is infinitesimally close to $s$, the points $y_n$ will be infinitesimally close to $x$. Using the previous part, since $T^{s,y_n} \to T^{s,x}$, this completes the proof.  

\end{proof}
\begin{remark}
The Proposition \ref{continuity-hitting-time} is no longer true if dimension $\delta >0$. For example when $\delta =1$, then $H(0,t,x) = x + B_t$ is a standard Brownian motion started at $x$. Thus $T^{0,x}$ is given by the first hitting time when $B$ hits level $-x$. It is well known that this is a L\'evy subordinator process which in particular has jumps. This is also implicitly related to the fact that when $\delta >0$, $T^{0,1}$ doesn't have finite $p$-th moment for any $p>1$.   
\end{remark}

\begin{proof}[Proof of Theorem \ref{cont-flow}]
In order to prove Theorem \ref{cont-flow}, we will give a candidate for the stochastic flow $\tilde{H}$ which is a modification of $H$ and we will verify that it is continuous. To define it, we first note that it follows from \eqref{crucial-estimate} and an argument similar to the proof of Theorem \ref{main-thm-1}, the limit $\lim_{y\to 0+}H(s,t,iy)$ exists uniformly in $s,t$. We define $\tilde{H}(s,t,0) := \lim_{y\to 0+}H(s,t,iy)$ which is by definition jointly continuous in $s,t$. Now for $x\in \mathbb{R}\setminus {0}$, if $t\leq T^{s,x}$ then define $\tilde{H}(s,t,x)$ simply by solving the ODE \eqref{flow-eqn} which admits a unique solution for $t\leq T^{s,x}$. For $t> T^{s,x}$, define $\tilde{H}(s,t,x) := \tilde{H}(T^{s,x},t,0)$. Checking the continuity of $\tilde{H}(s,t,x)$ follows easily after knowing that $T^{s,x}$ is continuous jointly in $s,x$ as proven in Proposition \ref{continuity-hitting-time}.  
\end{proof}

%

\end{document}